\documentclass[12pt,reqno]{amsart}

\setlength{\textheight}{23cm}
\setlength{\textwidth}{16cm}
\setlength{\topmargin}{-0.8cm}
\setlength{\parskip}{0.3\baselineskip}
\hoffset=-1.4cm

\usepackage[all,2cell]{xy} \UseAllTwocells \SilentMatrices
\usepackage{amssymb}

\newtheorem{theorem}{Theorem}[section]
\newtheorem{proposition}[theorem]{Proposition}
\newtheorem{lemma}[theorem]{Lemma}

\theoremstyle{definition}
\newtheorem{remark}[theorem]{Remark}

\numberwithin{equation}{section}

\DeclareMathOperator*{\Supp}{Supp}
\DeclareMathOperator*{\Vect}{Vect}
\DeclareMathOperator*{\PVect}{PVect}
\DeclareMathOperator*{\Gal}{Gal}
\DeclareMathOperator*{\lcm}{l.c.m.}

\newcommand{\cO}{\mathcal{O}}
\newcommand{\PP}{\mathbb{P}}

\begin{document}

\title[Genuinely ramified maps and parabolic bundles]{Genuinely ramified maps and
stability of pulled-back parabolic bundles}

\author[I. Biswas]{Indranil Biswas}

\address{School of Mathematics, Tata Institute of Fundamental
Research, Homi Bhabha Road, Bombay 400005, India}

\email{indranil@math.tifr.res.in}

\author[M. Kumar]{Manish Kumar}

\address{Statistics and Mathematics Unit, Indian Statistical Institute,
Bangalore 560059, India}

\email{manish@isibang.ac.in}

\author[A.J. Parameswaran]{A. J. Parameswaran}

\address{School of Mathematics, Tata Institute of Fundamental
Research, Homi Bhabha Road, Bombay 400005, India}

\email{param@math.tifr.res.in}

\subjclass[2010]{14H30, 14J60}

\keywords{Genuinely ramified map, parabolic bundle,
formal orbifold, stable bundle.}

\begin{abstract}
Let $f\, :\, X\, \longrightarrow\, Y$ be a genuinely ramified map between irreducible
smooth projective curves defined over an algebraically closed field. Let $P$ be a branch
data on $Y$ such that $P(y)$ and $B_f(y)$, where $B_f$ is branch data for $f$, are linearly
disjoint for every $y\, \in\, Y$. Further assume that either $P$ is tame or $f$ is Galois.
Then the pullback, by $f$, of any
stable parabolic bundle on $Y$ with respect to $P$ is actually a stable parabolic
bundle on $X$ with respect to $f^*P$.
\end{abstract}
\maketitle

\section{Introduction}

Let $f\, :\, X\, \longrightarrow\, Y$ be a separable surjective map between irreducible
smooth projective curves defined over an algebraically closed field. In \cite{BP} the
following was proved:

If the homomorphism of \'etale fundamental groups $f_*\, :\,
\pi_1^{\rm et}(X)\,\longrightarrow\, \pi_1^{\rm et}(Y)$
induced by is surjective, and $E$ is a stable
vector bundle on $Y$, then the pullback $f^*E$ is also stable.

Any map $f$ as above such that the corresponding $f_*\, :\,
\pi_1^{\rm et}(X)\,\longrightarrow\, \pi_1^{\rm et}(Y)$ is surjective is called
a genuinely ramified map; see \cite[Proposition 2.6]{BP} and \cite[Lemma 3.1]{BP},
\cite[p.~574, Proposition 3.3]{BHS} for equivalent reformulation of it.

Given a smooth projective curve $Y$ defined over an algebraically closed field of 
characteristic zero, and some marked points $S$ of $Y$, parabolic bundles on $Y$ with 
parabolic structure over $S$ were introduced in \cite{MS} by Mehta and Seshadri. They 
carried out a detailed investigation of the parabolic bundles, and proved that the stable 
parabolic bundles of parabolic degree zero over a complex curve are in bijection with the 
irreducible unitary representations of the complement of the parabolic points of the 
curve. In \cite{Bi} a correspondence between the parabolic bundles and the orbifold 
bundles was established (see also \cite{Bo1} and \cite{Bo2}).

In \cite{KP} and \cite{KM} the definition of parabolic bundles was extended to smooth 
projective curves defined over an algebraically closed field of arbitrary characteristic. 
This was carried out using the formal orbifolds. A formal orbifold is a curve $Y$ together 
with a branch data $P$ on a finite subset $S\, \subset\, Y$, which comprises of a 
nontrivial Galois extension $P(y)$ of $K_{Y,y}$, the fraction field of $\widehat 
\cO_{Y,y}$ for each marked point $y_i\, \in\, S$. A parabolic structure on a vector bundle 
$V$ over $Y$ with branch data $P$, roughly speaking, consists of local equivariant structure 
on pullback of $V$ over the formal neighbourhood of $y_i$ which become trivial on the 
``punctured formal disc'', for all $y_i$ in $S$ (see Section \ref{sec2} for the precise 
definition).

A natural question to ask is whether an analogue of the above mentioned theorem of
\cite{BP} continues to hold in the more general set-up of parabolic bundles.

Let $f\, :\, X\, \longrightarrow\, Y$ be a genuinely ramified map between irreducible
smooth projective curves. Then $f$ produces a branch data $B_f$ on $Y$.

We prove the following theorem (see Theorem \ref{thm1}):

\begin{theorem}\label{thm-i}
Suppose $B_f(y)$ and $P(y)$ are linearly disjoint over $K_{Y,y}$ for all $y\,\in \,Y$.
Also assume that either $P$ is tame or the map $f$ is Galois. Then the pullback, by $f$,
of every stable parabolic bundle on $Y$ with respect to $P$ is a stable parabolic
bundle on $X$ with respect to $f^*P$.
\end{theorem}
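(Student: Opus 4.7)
The plan is to reduce the theorem to the ordinary-vector-bundle theorem of \cite{BP} by passing to a Galois orbifold cover realizing the branch data $P$, and checking that the lifted map between covers remains genuinely ramified.

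First, I would invoke the correspondence of \cite{Bi, KP, KM} between parabolic bundles on $(Y, P)$ and $G$-equivariant vector bundles on a Galois cover $q_Y \colon Y_P \longrightarrow Y$ with group $G$ and branch data exactly $P$; stability of a parabolic bundle $E$ corresponds to $G$-equivariant stability of the associated bundle $\widetilde E$ on $Y_P$. When $P$ is tame such a global cover exists; when $P$ is wild one uses formal orbifold covers and Galois descent, which is precisely where the hypothesis that $f$ is Galois enters. I would then form the pullback $X_P := X \times_Y Y_P$ with projections $\widetilde f \colon X_P \longrightarrow Y_P$ and $q_X \colon X_P \longrightarrow X$. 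The pointwise linear disjointness of $B_f(y)$ and $P(y)$ over $K_{Y,y}$ should force $X_P$ to be an irreducible smooth projective curve, with $q_X$ a Galois cover of group $G$ and branch data exactly $f^* P$, so that under the corresponding correspondence on $X$ the pullback $f^* E$ matches $\widetilde f^* \widetilde E$.

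The main obstacle is to verify that the lifted map $\widetilde f \colon X_P \longrightarrow Y_P$ is itself genuinely ramified. I would use the equivalent criterion in \cite[Lemma~3.1]{BP} and \cite[p.~574, Proposition~3.3]{BHS}: a separable surjection between smooth curves is genuinely ramified if and only if the fiber product of the source with itself over the target has a unique irreducible component meeting the diagonal. Since $X_P \times_{Y_P} X_P$ is a base change of $X \times_Y X$ along $Y_P \longrightarrow Y$, the linear disjointness of $P(y)$ and $B_f(y)$ is precisely the condition preventing new irreducible components from appearing over the branch locus, so the unique-component-through-the-diagonal property is inherited, and $\widetilde f$ is genuinely ramified. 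In the wild Galois case the same conclusion must be read off the compositum of local function fields at each branch point, where the Galois hypothesis on $f$ makes the local analysis tractable.

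Finally, I would combine \cite{BP} applied to $\widetilde f$ with a short equivariant argument. A $G$-equivariantly stable bundle $\widetilde E$ on $Y_P$ is polystable as an ordinary bundle whose stable summands form a single $G$-orbit. Pullback by the genuinely ramified map $\widetilde f$ preserves polystability (each stable summand remains stable on $X_P$ by \cite{BP}) and also preserves the $G$-orbit structure on the summands, since $\widetilde f$ is $G$-equivariant; hence $\widetilde f^* \widetilde E$ is again polystable with stable summands forming a single $G$-orbit, i.e., $G$-equivariantly stable. Unravelling the correspondence then gives the desired stability of $f^* E$ with respect to $f^* P$.
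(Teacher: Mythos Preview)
Your overall architecture matches the paper's: pass to a $\Gamma$-equivariant bundle on a Galois cover $Z\to Y$ realizing $P$, lift $f$ to a $\Gamma$-equivariant map $W\to Z$, check this lift is genuinely ramified, and prove that genuinely ramified $\Gamma$-equivariant maps preserve $\Gamma$-stability. Two of your steps, however, have genuine gaps.

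First, pointwise linear disjointness of $B_f(y)$ and $P(y)$ does not force the normalized fiber product $X\times_Y Y_P$ to be irreducible, nor does it make $\widetilde f$ genuinely ramified; for that one needs the \emph{global} condition that $k(X)$ and $k(Y_P)$ are linearly disjoint over $k(Y)$, which does not follow from the local hypothesis. The paper secures this by \emph{constructing} a specific cover $g\colon Z\to Y$ with $B_g=P$ on $\Supp(P)$ and $k(Z)$ linearly disjoint from $k(X)$: in the tame case (Lemma~\ref{red.geom.branch.data}) via an $N$-th root of a rational function with a simple zero away from the branch locus of $f$; in the wild case (Lemma~\ref{wild.red.geom.branch.data}) by using that $f$ is Galois and genuinely ramified to force any common subcover to be trivial. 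This is where the dichotomy ``$P$ tame or $f$ Galois'' actually enters, not where you locate it. With such a $Z$ in hand, Lemma~\ref{genuinely-ramified-pullback} proves $W\to Z$ is genuinely ramified by a Galois-closure argument rather than the fiber-product criterion you sketch.

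Second, your final implication is false as stated: ``polystable with stable summands forming a single $G$-orbit'' does \emph{not} imply $G$-equivariant stability. If $\widetilde E\cong V^{\oplus n}$ with $V$ stable, there is a single isotypic component, yet $G$-stability requires the induced $G$-representation on the multiplicity space $\mathrm{Hom}(V,\widetilde E)\cong k^n$ to be irreducible. The paper (Proposition~\ref{prop1}) argues differently: given a $\Gamma$-invariant subbundle $\mathcal S\subset f^*E$ of the same slope, it shows $\mathcal S\cong f^*V$ for some $V$ and then invokes the isomorphism $H^0(Y,\mathrm{Hom}(V,E))\xrightarrow{\ \sim\ } H^0(X,\mathrm{Hom}(f^*V,f^*E))$ from \cite[Lemma~4.3]{BP} to descend the inclusion to a map $\psi\colon V\to E$ whose image is a proper $\Gamma$-invariant subbundle of $E$, contradicting $\Gamma$-stability. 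That $H^0$-isomorphism is the missing ingredient in your sketch.
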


\section{Parabolic bundles}\label{sec2}

The base field $k$ is algebraically closed. Let $Y$ be an irreducible smooth projective 
curve. We recall some definitions from \cite[Section 2]{KP}. For each closed point $y\,\in 
\,Y$, let $K_{Y,y}$ be the fraction field of the completion $\widehat \cO_{Y,y}$. A 
\textit{branch data} $P$ on $Y$ assigns a finite Galois extension $P(y)$ of $K_{Y,y}$ for 
every closed point $y\,\in\, Y$ such that $P(y)$ is a trivial extension of $K_{Y,y}$ for 
all $y$ outside some finitely many closed points $y_1,\,\cdots,\, y_n$. The collection 
$\{y_1,\,\cdots,\, y_n\}$ is called the support of $P$. If $n\,=\, 0$, i.e., the support 
is empty, then the branch data will be called trivial. The branch data $P$ is called 
\textit{tame} if each of these extension of local fields is a tame extension.

Let $f\,:\,X\,\longrightarrow\, Y$ be a separable (same as generically smooth) nonconstant 
morphism of smooth projective curves. We will call such a morphism a (ramified) 
\emph{covering}. It will be called a \emph{Galois covering} if the induced field extension 
$k(X)/k(Y)$ is Galois. Using $f$, we will construct a branch data $B_f$ on $Y$. For 
$y\,\in\, Y$, define $B_f(y)$ to be the compositum of the Galois closure of $K_{X,x}/K_{Y,y}$ for 
all $x\,\in\, X$ with $f(x)\,=\,y$. Therefore, the support of $B_f$ is the branch locus of $f$. We 
note that if $f$ is tamely ramified, then $B_f(y)$ is the cyclic field extension 
of $K_{Y,y}$ whose degree is the least common multiple of the multiplicities of $f$ at the 
points of $f^{-1}(y)$.

A \emph{formal orbifold curve} is a pair $(Y,\,P)$ where $Y$ is a smooth projective curve 
and $P$ is a branch data on $Y$. A morphism $f\,:\,(X,\,Q)\,\longrightarrow\, (Y,\,P)$ of formal 
orbifold curves is a finite separable morphism $f:X\,\longrightarrow\, Y$ such that for 
all $x\,\in\, X$ the field $Q(x)$ contains $P(f(x))$. This morphism is called \'etale if 
$Q(x)\,=\,P(f(x))$ for all $x\,\in\, X$. We say that $(Y,\,P)$ is \emph{geometric}, or $P$ is 
a \emph{geometric branch data}, if there exists an \'etale covering $f\,:\,(X,\,0)\,\longrightarrow\, 
(Y,\,P)$ with $0$ being the trivial branch data. Note that in this case $P=B_f$.

Fix a finite subset
$$
S\, :=\, \{y_1,\,\cdots ,\,y_n\}\, \subset\, Y
$$
of a smooth projective curve $Y$. Fix a branch data $P$ with support $S$.
For any $y\, \in\, S$, let 
$I_{y_i}\,=\, {\rm Gal}(P(y_i)\big\vert K_{Y,y_i})$ be the Galois group.
The integral closure of
$\cO_{Y,y_i}$ in $P(y_i)$ will be denoted by $R_i$.

Let $V$ be a vector bundle over $Y$; the stalk of $V$ (viewed as a locally free sheaf) at $y_i$ is denoted by $V_{y_i}$. 
We recall from \cite{KM} that a parabolic structure on $V$ over $S$ with
branch data $P$ consists 
of an $I_{y_i}$--equivariant structure on $V_{y_i}\otimes_{\cO_{Y,y_i}} R_i$ together with 
an isomorphism between the induced $I_{y_i}$--equivariant structure on 
$V_{y_i}\otimes_{\cO_{Y,y_i}} P(y_i)$ and the trivial $I_{y_i}$--equivariant structure on 
$V_{y_i}\otimes_{\cO_{Y,y_i}} P(y_i)$, for every $y_i\, \in\, S$. See \cite[Section 4]{KM} 
for more details.

Let $f\, :\, (X,\,0)\,\longrightarrow\, (Y,\,P)$ be an \'etale Galois covering of formal 
orbifold curves; the Galois group of $f$ will be denoted by $\Gamma$. We know that the 
category $\PVect(Y,P)$ of parabolic bundles on $Y$ with the branch data $P$ is equivalent 
to the category of $\Gamma$--equivariant vector bundles on $X$ \cite[Theorem 4.11]{KM}. In 
fact in \cite{KP} the category of vector bundles $\Vect(Y,P)$ on $(Y,\,P)$ was defined as 
$\Gamma$--equivariant bundles on $X$, and it was shown to be independent of the choice of 
the \'etale Galois covering $f$ \cite[Proposition 3.6]{KP}. Note that if $P'\,\ge \,P$, 
then there is an embedding of categories $$\PVect(Y,P)\, \longrightarrow \,\PVect(Y,P')$$ 
\cite[Corollary 5.10]{KM}. Furthermore, if $P$ and $P'$ are geometric, then there is an 
embedding $\Vect(Y,P) \, \longrightarrow \,\Vect(Y,P')$ \cite[Theorem 3.7]{KP}. An 
orbifold bundle on $Y$ is an object of $\Vect(Y,P)$ for some geometric branch data $P$, 
and a parabolic bundle on $Y$ is an object of $\PVect(Y,P)$. The functors between 
$\Vect(Y,P)$ and $\PVect(Y,P)$ induce an equivalence between the category of orbifold 
bundles and the category of parabolic bundles $\PVect(Y)$ on $Y$.

Let $\PVect^t(Y)$ be the full subcategory of $\PVect(Y)$ whose objects come from the 
objects in $\PVect(Y,P)$ for some tame branch data $P$ on $Y$. Similarly, we can define 
the full subcategory of tame orbifold bundles whose objects come from objects in 
$\Vect(X,P)$ for some tame branch data $P$. The restriction of the above functor to 
these full subcategories is an equivalence as well.

When the base field has characteristic zero, parabolic bundles were defined by Mehta and Seshadri (\cite{MS} as follows.
Let $E$ be a vector bundle on $Y$ of rank $r$.
Let $E_*$ be a parabolic structure on $E$ with parabolic divisor 
$$
S\, :=\, \{y_1,\,\cdots ,\,y_n\}\, \subset\, Y\, .
$$
For every $y_i\, \in\, S$, let
$$
E_{p_i}\,=\,E_{i,1}\,\supset\, E_{i,2}\,\supset\, \cdots\,
\supset \,E_{i,l_i} \,\supset\, E_{i,l_i+1}\,=\, 0
$$
be the quasiparabolic filtration, and let
$$
0\,\leq\, \alpha_{i,1} \,< \,\alpha_{i,2} \,<\,
\cdots \,<\, \alpha_{i,l_i}\,<\, 1
$$
be the corresponding parabolic weights (see \cite{MS}, \cite{Bi}). Then
the parabolic degree of $E_*$ is defined to be
$$
\text{par-deg}(E_*)\,:=\, \text{degree}(E)+\sum_{i=1}^n\sum_{j=1}^{l_i}
\dim (E_{i,j}/E_{i,j+1})\cdot \alpha_{i,j},
$$
and the parabolic slope of $E_*$ is defined to be
$$
\text{par-}\mu(E_*)\,:=\, \frac{\text{par-deg}(E_*)}{r}\, .
$$

The parabolic bundle $E_*$ is called \text{stable} (respectively, \text{semistable}) if
$$
\text{par-}\mu(F_*)\, <\, \text{par-}\mu(E_*)\, \ \ {\rm (respectively, }\,\,
\text{par-}\mu(F_*)\, \leq\, \text{par-}\mu(E_*){\rm )}
$$
for every subbundle $0\, \not=\, F\, \subsetneq \, E$ with the parabolic structure
induced by that of $E_*$ (see \cite{MS}, \cite{Bi}). The parabolic bundle $E_*$ is
called \text{polystable} if
\begin{itemize}
\item $E_*$ is semistable, and

\item $E_*$ is a direct sum of stable parabolic bundles.
\end{itemize}

When the base field has positive characteristic, the parabolic degree, slope, stability and semistability were defined in terms of equivariant bundles (see Remark 3.9 of \cite{KP} and the paragraph following the remark). In characteristic zero, the definition of (semi)stability in terms of equivariant bundles and the above definition agree (see Proposition \ref{prop3}).

In \cite[Proposition 5.15]{KM} it was shown that if the base field $k$ has characteristic 
zero then the category $\PVect(Y)$ of parabolic bundles on $Y$ is equivalent to the category of 
parabolic bundles with rational weights on $Y$ in the sense of \cite{MS}, \cite{MY}. When 
characteristic $p$ of $k$ is positive, the same proof verbatim gives us the following 
proposition.

\begin{proposition}\label{prop2}
The category of $\PVect^t(Y)$ is equivalent to the category of parabolic bundles in the 
sense of Mehta-Seshadri, \cite{MS}, with rational weights satisfying the condition that the denominators
in weights are coprime to the characteristic $p$.
\end{proposition}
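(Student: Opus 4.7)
The plan is to mimic the proof of \cite[Proposition 5.15]{KM} and simply check that every step of that characteristic zero argument survives intact once one restricts to tame branch data on one side and to weights with denominators coprime to $p$ on the other. I will build the equivalence by going through the intermediate category of equivariant bundles on an \'etale Galois cover of the formal orbifold, which is well-defined for tame branch data since the cover can be realized as an honest finite \'etale Galois cover of $Y\setminus S$ whose ramification over each $y_i$ is tame.

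First, I would construct the functor from Mehta--Seshadri parabolic bundles with weights $\alpha_{i,j}=a_{i,j}/N_i$ (with $\gcd(N_i,p)=1$) to $\PVect^t(Y)$. Let $N$ be the l.c.m.\ of all the $N_i$; by hypothesis $\gcd(N,p)=1$, so there exists a tame cyclic branch data $P$ on $Y$ supported on $S$ with $|\Gal(P(y_i)|K_{Y,y_i})|$ a multiple of $N_i$, and, since $P$ is tame and $k$ is algebraically closed, $P$ is geometric: it arises from an honest Kummer cover $f:X\longrightarrow Y$ tamely ramified of the correct orders at the $y_i$. On $X$ one then builds the usual $\Gamma$-equivariant bundle associated to $(E,E_*)$ by the Biswas correspondence, using the weights to prescribe the characters of the isotropy representation at each point of $f^{-1}(y_i)$; the character group of the cyclic isotropy is $\mathbb{Z}/N_i\mathbb{Z}$, which is exactly what accommodates weights with denominator $N_i$. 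By \cite[Theorem 4.11]{KM} (equivalence of $\PVect(Y,P)$ with $\Gamma$-equivariant bundles on $X$), this $\Gamma$-equivariant bundle defines an object of $\PVect(Y,P)\subset\PVect^t(Y)$.

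In the other direction, given $W\in\PVect^t(Y)$, by definition $W$ lifts to an object of $\PVect(Y,P)$ for some tame $P$, which under \cite[Theorem 4.11]{KM} corresponds to a $\Gamma$-equivariant bundle $\widetilde{W}$ on the associated tame \'etale Galois cover $f:(X,0)\longrightarrow(Y,P)$. At each point $\widetilde{y}$ over $y_i$ the isotropy subgroup $I_{\widetilde{y}}\subset\Gamma$ is cyclic of order coprime to $p$, so its action on the fiber $\widetilde{W}_{\widetilde{y}}$ decomposes into characters; one reads off the quasiparabolic filtration and the weights in the usual way (the weight $a/|I_{\widetilde{y}}|$ attached to the character sending a chosen generator to $\zeta^a$). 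Since the denominators divide the orders of the tame isotropy groups, they are coprime to $p$ as required. Functoriality and the fact that the two constructions are mutually inverse are formal consequences of the Biswas equivalence and are already verified in characteristic zero in \cite[Proposition 5.15]{KM}; no argument uses divisibility by $p$, so the same verification goes through.

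The only real point to watch is the interface between ``tame branch data'' on the formal orbifold side and ``weights with denominator coprime to $p$'' on the Mehta--Seshadri side, and I expect this to be the main (mild) obstacle: one must check that the tameness assumption on $P$ forces all isotropy characters to have order coprime to $p$, and conversely that any Mehta--Seshadri parabolic structure with such weights can be geometrically realized by a tame Kummer cover with cyclic isotropy of the correct orders. Both are standard in the theory of tame covers of curves over an algebraically closed field, so the verbatim adaptation promised in the excerpt goes through and yields the stated equivalence.
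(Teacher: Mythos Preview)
Your proposal is correct and follows exactly the approach indicated in the paper: the paper gives no separate proof but simply states that the argument of \cite[Proposition 5.15]{KM} goes through verbatim in positive characteristic under the tameness hypothesis, and your sketch is precisely a fleshing-out of that verbatim adaptation. The one point you flag as needing care---that tame isotropy forces cyclic groups of order prime to $p$ and hence weights with denominators prime to $p$, and conversely---is indeed the only new observation needed, and it is routine.
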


Given a branch data $P$ on $Y$, and a covering $f:X\,\longrightarrow\, Y$, for a closed 
point $x\,\in\, X$, let $(f^*P)(x)$ be the compositum $P(f(x))K_{X,x}$. Then $f^*P$ is a 
branch data on $X$; it is defined by $x\, \longmapsto\, (f^*P)(x)$.

Given a covering $f\,:\,X\, \longrightarrow \,Y$, in \cite{KP} the pullback functor from 
the category $\Vect(Y,P)$ to the category $\Vect(X,f^*P$) was defined. This was used in 
\cite{KM} to define the functor $$\PVect(Y,P)\, \longrightarrow \,\PVect(X,f^*P).$$ Note 
that if $P$ is a tame branch data on $Y$ then $f^*P$ is also tame. Hence tame orbifold 
(respectively, parabolic) bundles on $Y$ pull back to tame orbifold (respectively, 
parabolic) bundles on $X$.

\section{Genuinely ramified covers relative to a branch data}

A separable surjective map $f\,:\,X\, \longrightarrow \,Y$ between irreducible smooth projective 
curves is called \textit{genuinely ramified} if the homomorphism of \'etale fundamental
groups
$$
f_*\, :\, \pi_1^{\rm et}(X)\,\longrightarrow\, \pi_1^{\rm et}(Y)
$$
induced by $f$ is surjective; see \cite[Definition 2.5]{BP} and \cite[Proposition 2.6]{BP}.

\begin{lemma}\label{genuinely-ramified-pullback}
Let $f\,:\,X\, \longrightarrow \,Y$ be a genuinely ramified map.
Let $g\,:\,Z\, \longrightarrow \,Y$ be a Galois cover such that $B_f(y)$ and $B_g(y)$ are
linearly disjoint over $K_{Y,y}$ for all $y\,\in \,Y$.
Suppose $k(Z)$ and $k(X)$ are linearly disjoint over $k(Y)$.
Let $W$ be the normalization of the fiber product $X\times_Y Z$.
Then the natural projection
$$
p_Z\, :\, W\, \longrightarrow \, Z
$$
is genuinely ramified. 
\end{lemma}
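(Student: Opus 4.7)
The plan is to use the reformulation from \cite[Proposition~2.6]{BP}: a covering of smooth projective curves is genuinely ramified precisely when it admits no nontrivial \'etale intermediate cover. I would argue by contradiction: assume $p_Z\colon W\to Z$ factors as $W\to Z'\xrightarrow{h}Z$ with $h$ \'etale of degree $d>1$, and construct from this a nontrivial \'etale intermediate cover of $f\colon X\to Y$, contradicting the hypothesis that $f$ is genuinely ramified.

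Work in function fields: set $L=k(Y)$, $K=k(X)$, $M=k(Z)$ and $M'=k(Z')$. The global linear disjointness of $K$ and $M$ over $L$ gives $k(W)=K\cdot M$ and makes $KM/K$ Galois with group $\Gamma:=\Gal(M/L)$. Inside $KM$ we have $M\subset M'\subset KM$ with $M'/M$ unramified. The first step is to enlarge $M'$ to a $\Gamma$-stable intermediate field: let $M''\subset KM$ be the compositum of the conjugates $\gamma(M')$ for $\gamma\in\Gamma$. Then $M''/M$ is still unramified (as a compositum of unramified extensions of $M$), $M''$ is $\Gamma$-stable, and $K\cdot M''=KM$ (since $KM'=KM$ already).

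Next I descend $M''$ via Galois theory. Set $K':=M''\cap K\supseteq L$. Any $\gamma\in\Gamma$ fixing $M''$ pointwise also fixes $K$, hence fixes $K\cdot M''=KM$, so such $\gamma$ must be trivial. Thus $\Gamma$ acts faithfully on $M''$ with fixed field $K'$, giving $[M'':K']=|\Gamma|=[M:L]$; combined with $[M'':L]=[M'':M]\cdot[M:L]$, this yields $[K':L]=[M'':M]\geq[M':M]=d>1$. Let $Y'$ be the smooth projective curve with function field $K'$, so that $f$ factors as $X\to Y'\to Y$ with $[Y':Y]>1$.

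It remains to check that $Y'\to Y$ is \'etale; this is where the local hypothesis on $B_f$ and $B_g$ is used. For $y'\in Y'$ over $y\in Y$, choose $x\in X$ above $y'$ and $z\in Z$ above $y$. Then $K_{Y',y'}\subset K_{X,x}$, so the Galois closure of $K_{Y',y'}/K_{Y,y}$ lies in $B_f(y)$; while $K_{Z,z}/K_{Y,y}$ is itself Galois and lies in $B_g(y)$. By hypothesis these two Galois closures are linearly disjoint over $K_{Y,y}$, and a standard comparison of inertia groups in the compositum of two linearly disjoint Galois local extensions shows that the unramifiedness of $K_{Y',y'}\cdot K_{Z,z}$ over $K_{Z,z}$ is equivalent to the unramifiedness of $K_{Y',y'}$ over $K_{Y,y}$. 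Since $K'M\subset M''$ is unramified over $M$, the former holds; hence $Y'\to Y$ is \'etale at every $y'$, which yields the required nontrivial \'etale intermediate cover of $f$ and so the desired contradiction. The step I expect to be most delicate is this final local ramification comparison, where one must pass to Galois closures in order to invoke the disjointness of $B_f(y)$ and $B_g(y)$ correctly.
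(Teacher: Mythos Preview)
Your argument is correct and follows the same overall contradiction strategy as the paper --- assume an intermediate \'etale cover $Z'\to Z$ of $p_Z$ exists, descend it to an intermediate cover $Y'\to Y$ of $f$, and show the latter is \'etale using the local linear--disjointness hypothesis --- but the \emph{descent step} is genuinely different. The paper passes to the Galois closure $\widetilde X\to Y$ of $f$ with group $G$, identifies $G$ with $\Gal(k(\widetilde W)/k(Z))$, takes $H=\Gal(k(\widetilde W)/k(Z'))\le G$, and sets $k(Y')=k(\widetilde X)^H$; the containment $k(Y')\subset k(X)$ is then immediate from $H\supset \Gal(k(\widetilde W)/k(W))$. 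You instead descend along the \emph{other} Galois group $\Gamma=\Gal(M/L)$: you $\Gamma$--stabilise $M'$ to $M''$ and take $K'=M''\cap K$, using Artin's lemma to control $[K':L]$. Both routes are clean; the paper's avoids the stabilisation step at the cost of introducing $\widetilde X$, while yours stays inside $KM$ and uses only the Galois structure already provided by $g$.

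For the final \'etaleness check, note that the ``standard comparison of inertia groups'' you invoke is more than is needed here: since the residue field $k$ is algebraically closed, every finite extension of $K_{Y,y}$ is totally ramified, so ``unramified'' simply means ``trivial''. Thus $K_{Y',y'}\cdot K_{Z,z}=K_{Z,z}$ gives $K_{Y',y'}\subset K_{Z,z}\subset B_g(y)$, while $K_{Y',y'}\subset K_{X,x}$ gives $K_{Y',y'}\subset B_f(y)$; linear disjointness then forces $K_{Y',y'}=K_{Y,y}$. This is exactly the paper's one--line verification, and it makes the step you flagged as most delicate essentially immediate.
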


\begin{proof}
We first observe that the condition that $k(X)$ and $k(Z)$ are linearly disjoint over $k(Y)$ implies
that $W$ is connected.

Let $$\widetilde{f}\,:\,\widetilde{X}\, \longrightarrow \,Y$$ be the Galois closure of $f$ 
with Galois group $G$, and let $\widetilde{W}$ be the Galois closure of $p_Z$. We note 
that $\widetilde W$ is the normalization of $\widetilde{X}\times_Y Z$, and the Galois 
group of $k(\widetilde{W})/k(Z)$ is also $G$. Suppose $p_Z$ is not genuinely ramified. 
Then there exists an \'etale covering $Z'\, \longrightarrow \,Z$ dominated by $W$. Let 
$H\,=\,\Gal(k(\widetilde W)/k(Z'))$, and let $Y'$ be the normalization of $Y$ in $k(\widetilde 
X)^H$. Then $Y'$ is dominated by $X$. Moreover, for any point $y\,\in \,Y$ and a point 
$y'\,\in \,Y'$ lying above $y$, we know $K_{Y',y'}$ is contained in $B_f(y)$ and $B_g(y)$. 
Hence $Y'\, \longrightarrow \,Y$ is \'etale, which contradicts the
condition that $f$ is genuinely ramified (see \cite[Proposition 2.6]{BP}).
\end{proof}

\begin{lemma}\label{red.geom.branch.data}
Let $f\,:\,X\, \longrightarrow \,Y$ be a covering of smooth
projective curves. Let $P$ be a tame branch 
data on $Y$ such that $P(y)$ and $B_f(y)$ are linearly disjoint over $K_{Y,y}$ for
all $y\,\in \,Y$. 
Then there exists a Galois covering $g\,:\,Z\, \longrightarrow \,Y$ such that
\begin{enumerate}
\item $k(Z)$ and $k(X)$ 
are linearly disjoint over $k(Y)$,

\item $B_f(y)$ and $B_g(y)$ are linearly disjoint over $K_{Y,y}$ for all 
$y\,\in \,Y$, and

\item $P(y)\,=\, B_g(y)$ for all $y$ in support of $P$.
\end{enumerate}
\end{lemma}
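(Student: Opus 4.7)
The plan is to construct $g$ in two stages: first realize the local extensions prescribed by $P$ as the branch data of some Galois cover $g_1 : Z_1 \to Y$, and then modify $g_1$ to achieve global linear disjointness from $f$. Since $P$ is tame and $k$ is algebraically closed, each $P(y_i)/K_{Y,y_i}$ is the unique tame cyclic extension of some degree $n_i$ coprime to $\mathrm{char}(k)$. To realize these globally I would pick rational functions $h_i \in k(Y)^\times$ with $v_{y_i}(h_i)$ coprime to $n_i$ at each $y_i \in S := \Supp(P)$ (existence by Riemann--Roch), form the Kummer covers $z^{n_i} = h_i$, and take the Galois closure of their composite; after pruning unneeded ramification at zeros and poles of the $h_i$ outside $S$, this yields a Galois $g_1 : Z_1 \to Y$ with $B_{g_1}(y_i) = P(y_i)$ and $g_1$ unramified off $S$. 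Conditions~(3) and~(2) are then immediate: (3) by construction, and (2) from the disjointness hypothesis at $y \in S$ together with triviality of $B_{g_1}(y)$ off $S$.

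It remains to arrange $k(Z_1) \cap k(X) = k(Y)$. The intersection corresponds to a subcover $Y' \to Y$ dominated by both $f$ and $g_1$; at each $y$, its local ramification lies in $B_f(y) \cap B_{g_1}(y)$, which equals $K_{Y,y}$ by the disjointness hypothesis at $y \in S$ and by triviality off $S$. So $Y' \to Y$ is \'etale. If $Y' \ne Y$, I would modify $g_1$ by adjoining auxiliary ramification: pick a finite set $W \subset Y$ disjoint from $S$ and from the branch locus of $f$, choose a Galois cover $h : Z'' \to Y$ ramified exactly over $W$ with tame cyclic local extensions, and replace $g_1$ by the normalization $Z$ of $Z_1 \times_Y Z''$. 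Then $B_g(y_i) = P(y_i)$ at $y_i \in S$ (since $h$ is unramified there), $B_g(w_j)$ is linearly disjoint from the trivial $B_f(w_j)$ at $w_j \in W$, and with a sufficiently generic choice of $h$ the enlarged $k(Z)$ becomes linearly disjoint from $k(X)$ over $k(Y)$.

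I expect the main obstacle to be this last modification step: choosing $h$ generically enough to kill every \'etale subcover of $Y$ dominated by $f$, while preserving the prescribed local structure at $S$. This rests on the large supply of tame cyclic covers of $Y$ ramified at prescribed auxiliary points (a standard consequence of the structure of the tame fundamental group of $Y \setminus (S \cup W)$) together with the finiteness of the maximal \'etale cover dominated by $f$, so that a sufficiently generic auxiliary $h$ can be chosen to miss it.
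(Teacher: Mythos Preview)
Your Stage~2 contains a genuine gap: replacing $Z_1$ by the normalization of $Z_1 \times_Y Z''$ \emph{enlarges} the function field, and passing from $k(Z_1)$ to any $k(Z) \supset k(Z_1)$ can only enlarge the intersection with $k(X)$, never shrink it. Concretely, if $Y' \to Y$ is a nontrivial \'etale subcover with $k(Y') \subset k(Z_1) \cap k(X)$, then $k(Y') \subset k(Z_1) \subset k(Z)$, so $k(Y') \subset k(Z) \cap k(X)$ as well, no matter how $h$ is chosen. No genericity in the auxiliary cover can undo this. (Stage~1 also has a softer issue: ``pruning unneeded ramification'' is not an operation one can perform on a cover; the extra zeros and poles of the $h_i$ produce ramification that stays. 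This is not fatal, since condition~(2) only needs the extra ramification to lie outside $\Supp(B_f)$, but you should say that rather than invoke a nonexistent pruning step.)

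The paper achieves everything in one stroke with a single cyclic Kummer cover. Let $N$ be the $\lcm$ of the degrees $[P(y):K_{Y,y}]$, choose a rational function $\alpha \in k(Y)$ with a \emph{simple} zero at some point $y_1 \notin \Supp(B_f)$, a zero of order $N/[P(y):K_{Y,y}]$ at each $y \in \Supp(P)$, and all remaining zeros and poles outside $\Supp(B_f)$; take $Z$ to be the normalization of $Y$ in $k(Y)(\alpha^{1/N})$. This cover is \emph{totally} ramified at $y_1$, so every intermediate field $k(Y) \subsetneq L \subset k(Z)$ is ramified over $y_1$; since $f$ is \'etale over $y_1$, the intersection $k(Z) \cap k(X)$ is forced to be $k(Y)$, and as $k(Z)/k(Y)$ is Galois this gives~(1). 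Conditions~(2) and~(3) follow directly from where the zeros and poles of $\alpha$ sit. The idea you are missing is that total ramification at a single auxiliary point, built into the cover from the outset rather than appended afterward, is precisely what rules out any common \'etale subcover.
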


\begin{proof}
Let $y_1$ be a point in $Y$ outside the support of $B_f$. Let $N$ be the least
common multiple of $[P(y):K_{Y,y}]$ for 
all $y\,\in \,Y$. Take a rational function $\alpha$ on $Y$ which has
\begin{itemize}
\item a simple zero at $y_1$,

\item a zero of multiplicity $N/P(y)$ at every $y$ in support of $P$ and

\item the 
remaining zeroes and poles are outside the support of $B_f$.
\end{itemize}
Let $g\,:\,Z\, 
\longrightarrow \,Y$ be the normalization of $Y$ in $k(Y)[\alpha^{1/N}]$. Note that $Z \, 
\longrightarrow \,Y$ is totally ramified at $y_1$, and $X\, \longrightarrow \,Y$ is \'etale 
over $y_1$, and hence $k(Z)$ and $k(X)$ are linearly disjoint over $k(Y)$. The rest of the 
statements follow as $g$ is ramified only at the zeros and poles of $\alpha$ and the 
ramification index of $g$ at a zero $y$ of $\alpha$ is $N/ord_y(\alpha)$.
\end{proof}

\begin{lemma}\label{lem3}
Let $P$ be a branch data on $Y$. Let $S$ be a finite subset of $Y\setminus \Supp(P)$. Then
there exists a geometric branch data $P'$ on $Y$ such that $P'(y)\,=\,P(y)$ for all $y$ in
the support of $P$ while the support of $P'$ is disjoint from $S$.
\end{lemma}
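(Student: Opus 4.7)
The plan is to construct a Galois covering $f\, :\, X\, \longrightarrow\, Y$ whose branch data $B_f$ coincides with $P$ on $\Supp(P)$ and whose remaining support is disjoint from $S$; setting $P'\, :=\, B_f$ will then give a geometric branch data meeting all the requirements of the lemma.

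The construction proceeds in two stages. First, for each $y_i\, \in\, \Supp(P)\, =\, \{y_1,\,\cdots,\,y_n\}$, I would produce a Galois cover $f_i\, :\, X_i\, \longrightarrow\, Y$ with Galois group $H_i\, :=\, \Gal(P(y_i)/K_{Y,y_i})$, totally ramified over $y_i$ with local extension isomorphic to $P(y_i)/K_{Y,y_i}$ as Galois extensions, and whose remaining ramification is confined to a finite set $T_i\, \subset\, Y\, \setminus\, (S\, \cup\, \Supp(P))$. In characteristic zero this existence is a direct application of Riemann's existence theorem; in positive characteristic it rests on the local-to-global realization of Galois covers with prescribed local structure (Harbater's formal patching, or its incarnation in the formal-orbifold framework of \cite{KP}). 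By a generic choice of the auxiliary ramification, the sets $T_i$ can be taken pairwise disjoint and also disjoint from $\Supp(P)$, which in turn makes the function fields $k(X_i)$ linearly disjoint over $k(Y)$. Second, I would take $X$ to be the normalization of the fiber product $X_1\, \times_Y\, \cdots\, \times_Y\, X_n$ and $f\, :\, X\, \longrightarrow\, Y$ the natural projection; the linear disjointness forces $X$ to be irreducible and $f$ to be Galois with group $H_1\, \times\, \cdots\, \times\, H_n$.

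To verify the desired properties, fix $y_i\, \in\, \Supp(P)$. Since $f_j$ is \'etale over $y_i$ for $j\, \neq\, i$, the local extension of $f$ at any point above $y_i$ coincides with that of $f_i$ there, namely $P(y_i)$; hence $B_f(y_i)\, =\, P(y_i)$. Outside $\Supp(P)$, the support of $B_f$ is contained in $\bigcup_i T_i$ and is therefore disjoint from $S$. With $P'\, :=\, B_f$, the morphism $f\, :\, (X,\,0)\, \longrightarrow\, (Y,\,P')$ is \'etale: since $f$ is Galois, every preimage of a given $y$ has the same local extension, which equals $B_f(y)$ by construction. This witnesses that $P'$ is geometric, while $P'(y)\, =\, P(y)$ for $y\, \in\, \Supp(P)$ and $\Supp(P')\, \cap\, S\, =\, \emptyset$.

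The main obstacle is the first stage: one must realize a prescribed local Galois extension $P(y_i)$, and not merely its Galois group, as the $y_i$-completion of a global Galois cover of $Y$. When $P(y_i)$ is tame this is immediate from Kummer theory, as in the proof of Lemma \ref{red.geom.branch.data}. In the wild case, however, this is a genuinely nontrivial input, and the argument relies on the formal-patching and local-global realization results that underlie the formal-orbifold theory used throughout the paper.
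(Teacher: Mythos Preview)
Your proof is correct and follows a genuinely different route from the paper's. The paper first treats $Y=\PP^1$ by adjoining a single auxiliary point $y_0\notin S\cup\Supp(P)$ carrying a tame cyclic local extension of degree $n_0=\lcm$ of the tame degrees of the $P(y_i)$, and then invokes \cite[Corollary~2.32]{KP} to conclude that the resulting branch data is geometric; for general $Y$ it chooses a finite morphism $a:Y\to\PP^1$ that separates $\Supp(P)\cup S$ and is totally ramified at one point $y_0$, pushes $P$ down to a branch data $Q$ on $\PP^1$, applies the $\PP^1$ case, and pulls the resulting Galois cover back along $a$. Your argument instead works directly over $Y$: it realizes each local extension $P(y_i)$ by its own global Galois cover $f_i$ (via Riemann existence in characteristic zero, or Harbater-type patching in positive characteristic) and then combines them by a normalized fiber product. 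Both strategies ultimately rest on the same local-to-global realization input; the paper packages it once as \cite[Corollary~2.32]{KP} on $\PP^1$, whereas you invoke it $n$ times over $Y$. The paper's reduction is more economical and makes the external dependence explicit, while your construction is more modular and avoids the auxiliary map to $\PP^1$. One small refinement: the linear disjointness of the $k(X_i)$ does not follow from the pairwise disjointness of the $T_i$ alone, but from the fact that $f_i$ is totally ramified at $y_i$ while every $f_j$ with $j\neq i$ is \'etale there (which your arrangement of the $T_i$ away from $\Supp(P)$ guarantees); with that observation the argument goes through.
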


\begin{proof}
We first assume that $Y\,=\,\PP^1$. Consider $\Supp(P)\,=\,\{y_1,\,\cdots,\,y_r\}$. Let 
$n_i$ be the tame degree of $P(y_i)$, and $n_0\,:=\,\lcm \{n_i\,\,\mid\,\, 1\,\le\, 
i\,\le\, r\}$. Take any $y_0\,\in \,\PP^1$ outside $S\bigcup \Supp(P)$. Set $P'(y_0)$ to be the cyclic 
extension of $K_{Y,y_0}$ of degree $n_0$ and $P'(y)\,=\,P(y)$ for all $y\,\in \,Y$ 
different from $y_0$. Then $P'$ is geometric by \cite[Corollary 2.32]{KP}.
 
For the general case of $Y$, take a point $y_0\,\in \,Y$ outside $\Supp(P)$, and also
take a regular function $a$ on $Y\setminus y_0$ which takes different values at
the points of 
$\Supp(P)\bigcup S$. Then we get a covering $a\,:\,Y\, \longrightarrow \,\PP^1$ totally ramified 
at $y_0$. Let $Q(a(y))\,=\,P(y)$ for $y\,\in \,\Supp(P)$, and take $x_0\,\in \,\PP^1\setminus 
a(\Supp(P)\bigcup S\bigcup\{y_0\})$. Then by the $Y\,=\,\PP^1$ case there is a Galois covering
$b\,:\,Z\, \longrightarrow \,\PP^1$ such that $B_b(x)\,=\,Q(x)$ for $x\,\in \,\PP^1$ 
except at one point $x_0$. Now the normalized pullback $c\,:\,Z'\, \longrightarrow \,Y$ of 
$b$ is a connected Galois cover of $Y$ as $b$ is \'etale over $a(y_0)$ and $a$ is 
totally ramified at $y_0$. Set $P'\,=\,B_c$. Then $P'$ satisfies all the statements in
the lemma.
\end{proof}

\begin{lemma}\label{wild.red.geom.branch.data}
Let $f\,:\,X\, \longrightarrow \,Y$ be a genuinely ramified Galois covering of smooth
projective curves.
Let $P$ be a branch data on $Y$ 
such that $P(y)$ and $B_f(y)$ are linearly disjoint over $K_{Y,y}$ for all $y\,\in \,Y$.
Then there exists a Galois covering $g\,:\,Z\, \longrightarrow \,Y$ such that
\begin{enumerate}
\item $k(Z)$ and $k(X)$ are linearly 
disjoint over $k(Y)$,

\item $B_f(y)$ and $B_g(y)$ are linearly disjoint over $K_{Y,y}$ for all $y\,\in \,Y$, and 

\item $P(y)\,=\, B_g(y)$ for all $y$ in the support of $P$.
\end{enumerate}
\end{lemma}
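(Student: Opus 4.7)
My plan is to parallel Lemma \ref{red.geom.branch.data}, replacing its explicit Kummer construction (which only produces tame extensions) by an appeal to Lemma \ref{lem3}, which realizes arbitrary branch data geometrically. Concretely, I would apply Lemma \ref{lem3} to $(Y,P)$ with $S\,=\,\Supp(B_f)\setminus \Supp(P)$; this yields a geometric branch data $P'$ on $Y$ with $P'(y)\,=\,P(y)$ for $y\,\in\, \Supp(P)$ and $\Supp(P')\cap \Supp(B_f)\,\subseteq\, \Supp(P)$. Since $P'$ is geometric, there is an \'etale covering $h\,:\,(W,0)\,\longrightarrow\,(Y,P')$, so $B_h\,=\,P'$. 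I would then take $g\,:\,Z\,\longrightarrow\,Y$ to be the Galois closure of $h$.

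The crux is the equality $B_g\,=\,B_h\,=\,P'$; this is a purely local statement, following from the fact that $k(Z)$ is the compositum of the conjugates $\sigma(k(W))$ in $\overline{k(Y)}$, so for $z\,\mid\, y$ the completion $K_{Z,z}$ is the Galois closure inside $\overline{K_{Y,y}}$ of $K_{W,h(z)}$, and taking the compositum over $z\,\mid\, y$ recovers $B_h(y)$. Granting this, property (3) is immediate, and property (2) drops out case by case: for $y\,\in\, \Supp(P)$ it is the hypothesis of the lemma; for $y\,\in\, \Supp(P')\setminus \Supp(P)$ the choice of $S$ forces $y\,\notin\, \Supp(B_f)$, so $B_f(y)$ is trivial; and for $y\,\notin\, \Supp(P')$ the extension $B_g(y)$ itself is trivial.

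For property (1) I would argue using the genuine ramification of $f$. Let $L\,:=\,k(X)\cap k(Z)$; since both $k(X)/k(Y)$ (using $f$ Galois) and $k(Z)/k(Y)$ are Galois, so is $L/k(Y)$. Let $Y'\,\longrightarrow\, Y$ be the corresponding intermediate Galois cover, which is dominated by $X$. For any $y\,\in\, Y$ the local extension of $Y'/Y$ at $y$ sits inside $K_{X,x}\,=\,B_f(y)$ (using $f$ Galois) and inside $K_{Z,z}\,=\,B_g(y)$ (using $g$ Galois), hence inside $B_f(y)\cap B_g(y)\,=\,K_{Y,y}$ by property (2) together with the fact that two Galois extensions of a field are linearly disjoint iff they meet in the base. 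Therefore $Y'\,\longrightarrow\, Y$ is \'etale, and since $f$ is genuinely ramified and factors through $Y'$, we must have $Y'\,=\,Y$, i.e., $L\,=\,k(Y)$, proving (1).

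The step I expect to be the main obstacle is the identification $B_g\,=\,B_h$; this is the point at which wildness could in principle cause a discrepancy between the branch data of an \'etale covering and that of its Galois closure. Fortunately it is a purely local statement about Galois closures over $K_{Y,y}$ and holds independently of tameness. The Galois assumption on $f$ enters only in the last paragraph, where it lets me identify $K_{X,x}$ with $B_f(y)$; dropping it would break the linear-disjointness step.
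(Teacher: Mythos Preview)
Your proof is correct and follows essentially the same strategy as the paper: apply Lemma~\ref{lem3} with $S=\Supp(B_f)\setminus\Supp(P)$, realize the resulting $P'$ by a Galois cover $g$, check (2) and (3), and then use genuine ramification together with the local linear disjointness to force $k(X)\cap k(Z)=k(Y)$. The only difference is cosmetic: the paper takes $g:(Z,0)\to(Y,P')$ directly to be a Galois \'etale cover of formal orbifolds (one exists by the explicit construction in the proof of Lemma~\ref{lem3}), whereas you take an \'etale cover $h$ and pass to its Galois closure, verifying $B_g=B_h$---both routes produce $B_g=P'$ and the rest of the argument is identical.
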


\begin{proof}
Apply Lemma \ref{lem3} with $S\,=\,\Supp(B_f)\setminus \Supp(P)$ to obtain a geometric 
branch data $P'$ on $Y$ such that $P'(y)\,=\,P(y)$ for $y\,\in \,\Supp(P)$ and $S\subset 
Y\setminus \Supp(P')$. Let $g\,:\,(Z,\,0)\, \longrightarrow \,(Y,\,P')$ be a Galois 
\'etale covering of formal orbifolds. Then $g$ satisfy conclusion (2) and (3) of the lemma. Let $K\,=\,k(Z)\bigcap k(X)$, and let $a\,:\,W\, 
\longrightarrow \,Y$ be the normalization of $Y$ in $K$. Then $a\,:\,W\, \longrightarrow 
\,Y$ is \'etale. This is because $a$ is dominated by the coverings $f$ and $g$, the 
covering $g\,:\,Z\, \longrightarrow \,Y$ is \'etale over points in $S$ and $f$ is \'etale 
outside $B_f$, hence the branch locus of $a$ is contained in $\Supp(B_f)\bigcap \Supp(P)$. 
Now for a point $y$ in $\Supp(B_f)\bigcap \Supp(P)$ and $w\in W$ lying above $y$, the 
given conditions that $P(y)$ and $B_f(y)$ are linearly disjoint and $P'(y)\,=\,P(y)$
imply that the 
local field extension $K_{W,w}/K_{Y,y}$ is trivial. Hence points in $\Supp(B_f)\bigcap 
\Supp(P)$ are also not in the branch locus of $a$. Since $f$ is genuinely ramified $a$ is 
the identity map, i.e., $K\,=\,k(Y)$. Since $k(X)/k(Y)$ and $k(Z)/k(Y)$ are Galois, this implies 
$k(Z)$ and $k(X)$ are linearly disjoint over $k(Y)$.
\end{proof}

\section{Pullback of equivariant stable bundles} 

Let $X$ be a smooth projective curve equipped with an action of a finite group $\Gamma$.
A $\Gamma$--equivariant vector bundle $E$ on $X$ is called $\Gamma$--\textit{stable}
(respectively, $\Gamma$--\textit{semistable}) if for all $\Gamma$--equivariant subbundle
$0\, \not=\, F\, \subsetneq\, E$, the inequality
$$
\mu(F)\, :=\, \frac{{\rm degree}(F)}{{\rm rank}(F)}\, <\,
\mu(E)\, :=\, \frac{{\rm degree}(E)}{{\rm rank}(E)}\ \
\left(\text{respectively, }\ \mu(F)\,\leq \,\mu(E)\right)
$$
holds.

When $\Gamma\,=\, \{e\}$, a $\Gamma$--stable (respectively, $\Gamma$--semistable) vector
bundle is called stable (respectively, semistable). A semistable vector
bundle is called \textit{polystable} if it is a direct sum of stable vector bundles.

\begin{lemma}\label{lem1}
Let $E$ be a $\Gamma$--stable vector bundle on $X$. Then $E$ is polystable in the usual sense.
\end{lemma}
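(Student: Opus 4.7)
The plan is to use the uniqueness of canonical filtrations, together with the $\Gamma$-equivariance of $E$, to squeeze out both semistability and the polystable decomposition.

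First I would show that $E$ is semistable in the usual sense. Let $F \subset E$ be the maximal destabilizing subsheaf (the first step of the Harder--Narasimhan filtration). On a smooth projective curve $F$ is a subbundle. By uniqueness of the Harder--Narasimhan filtration, every automorphism of $E$ preserves $F$; in particular the action of $\Gamma$ on $E$ carries $F$ to itself, so $F$ is a $\Gamma$-equivariant subbundle. If $E$ were not semistable, then $\mu(F) > \mu(E)$, contradicting the assumption that $E$ is $\Gamma$-stable. Hence $E$ is semistable.

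Next I would promote semistability to polystability using the socle. Inside the abelian category of semistable bundles of slope $\mu(E)$, there is a unique maximal polystable subbundle $S \subset E$, namely the sum of all stable subbundles of $E$ of slope $\mu(E)$. Again by uniqueness, $S$ is preserved by $\Gamma$, so it is a $\Gamma$-equivariant subbundle with $\mu(S) = \mu(E)$. The $\Gamma$-stability hypothesis forces $\mu(F') < \mu(E)$ for every proper $\Gamma$-equivariant subbundle $0 \neq F' \subsetneq E$; applying this with $F' = S$ is impossible unless $S = E$. Therefore $S = E$, and $E$ is polystable.

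The only step that requires any care is the uniqueness/existence of the socle in positive characteristic, but this is a purely formal categorical fact once semistability is in hand: the category of semistable bundles of a fixed slope on $X$ is an abelian $k$-linear category whose simple objects are the stable bundles of that slope, and the sum of all simple subobjects of a given object is intrinsic to $E$ and hence $\Gamma$-invariant. No additional input about characteristic, ramification, or the structure of $\Gamma$ enters. Thus the main (mild) obstacle is simply to be explicit that on a smooth curve the destabilizing subsheaves we produce are genuine subbundles, which is automatic since torsion-free coherent sheaves on a smooth curve are locally free.
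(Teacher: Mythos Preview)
Your proof is correct and follows essentially the same approach as the paper: first establish semistability, then use uniqueness of the socle together with $\Gamma$-stability to conclude $S=E$. The only cosmetic difference is that the paper cites \cite[Lemma 3.10]{KP} for the semistability step, whereas you supply the standard Harder--Narasimhan argument directly.
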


\begin{proof}
Note that $E$ a is semistable bundle on $X$ by \cite[Lemma 3.10]{KP}.
Let $W\, \subset\, E$ be the socle (see \cite[p.~23, Lemma 1.5.5]{HL}). From the 
uniqueness of the socle bundle we know that the action of $\Gamma$ on $E$ preserves $W$. 
Since $E$ is $\Gamma$--stable, this implies that $W\,=\, E$. Hence $E$ is polystable. 
\end{proof}

Let $X$ and $Y$ be irreducible smooth projective curves such that each of them
is equipped with an action of $\Gamma$, and let
\begin{equation}\label{f}
f\, :\, X\, \longrightarrow\, Y
\end{equation}
be a nonconstant separable $\Gamma$--equivariant morphism.
Given a $\Gamma$--equivariant bundle $E\, \longrightarrow\, Y$, the action of $\Gamma$
on $E$ produces an action of $\Gamma$ on $f^*E$, because the map $f$ is $\Gamma$--equivariant.
So $f^*E$ is a $\Gamma$--equivariant bundle on $X$.

\begin{proposition}\label{prop1}
Assume that the map $f$ in \eqref{f} is genuinely ramified. Let
$E$ be a $\Gamma$--stable vector bundle on $Y$. Then the pullback $f^*E$ is also
$\Gamma$--stable.
\end{proposition}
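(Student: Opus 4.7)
The plan is to reduce $\Gamma$-stability of $f^*E$ to that of $E$ via the isotypic decomposition of $E$ as an ordinary polystable bundle, together with the theorem of \cite{BP} recalled in the introduction. By Lemma \ref{lem1}, $E$ is polystable and admits an isotypic decomposition
\[
E \;=\; \bigoplus_{i=1}^{m} E_i \otimes W_i,
\]
with the $E_i$ pairwise non-isomorphic stable bundles of slope $\mu(E)$ and the $W_i$ finite-dimensional multiplicity spaces. Applying \cite{BP} to each $E_i$ shows $f^*E_i$ is stable of slope $\mu(f^*E)$; together with the fully-faithful strengthening (that genuine ramification makes $\mathrm{Hom}(V,V')\to \mathrm{Hom}(f^*V,f^*V')$ an isomorphism for stable $V,V'$ of a common slope), the pullbacks $f^*E_1,\ldots,f^*E_m$ are still pairwise non-isomorphic. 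Hence $f^*E = \bigoplus_i f^*E_i\otimes W_i$ is the isotypic decomposition of $f^*E$.

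Next, suppose for contradiction that $0\neq F\subsetneq f^*E$ is a $\Gamma$-equivariant subbundle with $\mu(F)\geq \mu(f^*E)$; semistability of $f^*E$ forces equality. Polystable bundles are precisely the semisimple objects in the abelian category of semistable bundles on $X$ of slope $\mu(f^*E)$, and subobjects of semisimple objects are semisimple, so $F$ is polystable. Matching its isotypic decomposition against that of $f^*E$ yields
\[
F\;=\;\bigoplus_{i=1}^{m} f^*E_i\otimes U_i
\]
for subspaces $U_i\subseteq W_i$, not all zero and not all equal to $W_i$. Setting $F' := \bigoplus_i E_i\otimes U_i$ then yields a proper nonzero subbundle of $E$ with $\mu(F')=\mu(E)$.

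To conclude, it suffices to check that $F'$ is $\Gamma$-equivariant, which will contradict the $\Gamma$-stability of $E$. The $\Gamma$-structure on $E$, given by isomorphisms $\tilde\gamma:\gamma^*E\to E$, is equivalent (by uniqueness of the isotypic decomposition, after fixing identifications $\gamma^*E_i\cong E_{\sigma(\gamma)(i)}$) to the data of a permutation $\sigma(\gamma)$ of $\{1,\ldots,m\}$ together with linear isomorphisms $\varphi_i(\gamma):W_i\to W_{\sigma(\gamma)(i)}$. The pulled-back $\Gamma$-structure on $f^*E$ is encoded by the very same $\sigma$ and $\varphi_i$, because $f^*$ acts as the identity on the multiplicity spaces. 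Consequently the $\Gamma$-equivariance of $F$ and of $F'$ reduce to the identical condition $\varphi_i(\gamma)(U_i)\subseteq U_{\sigma(\gamma)(i)}$ for all $i$ and $\gamma$. The principal obstacle is the non-isomorphism claim $f^*E_i\not\cong f^*E_j$ for $i\neq j$, which is a fully-faithful strengthening of the \cite{BP} theorem; the rest is a formal unwinding of how isotypic decompositions and $\Gamma$-structures transport under pullback by $f$.
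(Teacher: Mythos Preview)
Your proof is correct and follows essentially the same strategy as the paper: both use Lemma~\ref{lem1} to get ordinary polystability of $E$, apply \cite[Theorem~1]{BP} to make each stable summand pull back to a stable bundle, and invoke the full faithfulness result \cite[Lemma~4.3]{BP} (your ``fully-faithful strengthening'') to descend a destabilising $\Gamma$--subbundle of $f^*E$ to one of $E$.

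The only difference is in the descent step. You track the $\Gamma$--structure combinatorially through the isotypic data $(\sigma(\gamma),\varphi_i(\gamma))$ and observe that this data is literally unchanged under $f^*$, so the subspace conditions $\varphi_i(\gamma)(U_i)\subset U_{\sigma(\gamma)(i)}$ transfer verbatim. The paper instead chooses an abstract $V$ with $f^*V\cong\mathcal S$, uses $H^0(Y,\mathrm{Hom}(V,E))\xrightarrow{\sim}H^0(X,\mathrm{Hom}(f^*V,f^*E))$ to descend the inclusion $\mathcal S\hookrightarrow f^*E$ to a map $\psi:V\to E$, and then concludes $\Gamma$--invariance of $\psi(V)$ from that of $\mathcal S$ by faithful flatness of $f$. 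The paper's route is a little shorter because it never unpacks the equivariant structure; yours is more explicit and makes transparent exactly which subspaces of the multiplicity spaces are being carried along.
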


\begin{proof}
From Lemma \ref{lem1} we know that $E$ is polystable.
Let
$$
E\,=\, \bigoplus_{i=1}^\ell F_i
$$
be a decomposition of $E$ into a direct sum of stable vector bundles with
$\mu(F_i)\,=\, \mu(E)$ for all $1\, \leq\, i\, \leq\, \ell$. Therefore, we have
\begin{equation}\label{f2}
f^*E\,=\, \bigoplus_{i=1}^\ell f^*F_i\, .
\end{equation}
Now from \cite[Theorem 1]{BP} we conclude that each $f^*F_i$ is stable. Since
$$
\mu(f^*F_i)\,=\, \text{degree}(f)\cdot \mu(F_i)\, ,
$$
this implies that $f^*E$ is polystable.

Assume that $f^*E$ is not $\Gamma$--stable. Let
\begin{equation}\label{cs}
0\, \not=\, {\mathcal S}\, \subsetneq\, f^*E
\end{equation}
be a subbundle such that
\begin{itemize}
\item $\mu({\mathcal S})\,=\, \mu(f^*E)$, and

\item the action of $\Gamma$ on $f^*E$ preserves $\mathcal S$
\end{itemize}

We will prove that $\mathcal S$ is polystable. For that first note that if
$V_1$ and $V_2$ are stable vector bundles with $\mu(V_1)\,=\, \mu(V_2)$, then any
nonzero homomorphism $V_1\, \longrightarrow\, V_2$ is an isomorphism. From this it
follows that any polystable vector bundle $\mathcal V$ can be uniquely expressed as
\begin{equation}\label{j1}
{\mathcal V}\, =\, \bigoplus_{i=1}^n V_i\otimes_k \text{Hom}(V_i,\, {\mathcal V})\, ,
\end{equation}
where $\{V_i\}_{i=1}^n$ are all stable vector bundles such that
\begin{itemize}
\item $\mu({\mathcal V})\, =\, \mu(V_i)$, and

\item $\text{Hom}(V_i,\, {\mathcal V})\, \not=\, 0$.
\end{itemize}
If ${\mathcal T}\, \subset\, {\mathcal V}$ is a subbundle of the polystable
vector bundle $\mathcal V$ such that $\mu({\mathcal V})\, =\, \mu({\mathcal T})$, then
\begin{equation}\label{j2}
{\mathcal T}\,=\, \bigoplus_{i=1}^n {\mathcal T}\cap
\left(V_i\otimes_k \text{Hom}(V_i,\, {\mathcal V})\right)
\end{equation}
with respect to the unique decomposition in \eqref{j1}. Now any subbundle $W$
of $V_i\otimes_k \text{Hom}(V_i,\, {\mathcal V})$ with $\mu(W)\,=\, \mu(V_i)$
must be of the form $V_i\otimes_k B$, where $B\, \subset\, \text{Hom}(V_i,\, {\mathcal V})$
is a subspace. This implies that $W$ is polystable. Since each nonzero
${\mathcal T}\bigcap\left(V_i\otimes_k \text{Hom}(V_i,\, {\mathcal V})\right)$ in
\eqref{j2} is polystable with $\mu\left({\mathcal T}\bigcap\left(V_i\otimes_k
\text{Hom}(V_i,\, {\mathcal V})\right)\right)\,=\, \mu(V_i)\,=\, \mu({\mathcal V})$, we
conclude that ${\mathcal T}$ is polystable.

Therefore, $\mathcal S$ in \eqref{cs} is polystable.

Since each $f^*F_i$ in \eqref{f2} is stable, we know that the polystable
bundle ${\mathcal S}$ in \eqref{cs} admits an isomorphism
\begin{equation}\label{f3}
{\mathcal S}\,\stackrel{\sim}{\longrightarrow}\, \bigoplus_{j=1}^{\ell'} f^*F_{a_j}
\end{equation}
with $a_1\, <\, \cdots \, <\, a_{\ell'}$ and $\ell'\, <\, \ell$. Denote
$$
V\,=\, \bigoplus_{j=1}^{\ell'} F_{a_j}\, ;
$$
from \eqref{f3} it follows that 
\begin{equation}\label{f4}
{\mathcal S}\,\stackrel{\sim}{\longrightarrow}\, f^*V\, .
\end{equation}

Let $A$ and $B$ be two semistable vector bundles on $Y$ with $\mu(A)\,=\, \mu(B)$. From
\cite[Lemma 4.3]{BP} we know that the natural homomorphism
$$
H^0(Y,\, \text{Hom}(A,\, B))\, \longrightarrow\, H^0(X,\, \text{Hom}(f^*A,\, f^*B))
$$
is an isomorphism. Therefore, there is a homomorphism
$$
\psi\, :\, V\, \longrightarrow\, E
$$
such that $f^*\psi$ coincides with the inclusion map ${\mathcal S}\, \hookrightarrow\, f^*E$
in \eqref{cs} once the isomorphism in \eqref{f4} is invoked. Note that since
$V$ and $E$ are semistable with $\mu(V)\,=\, \mu(E)$, it follows that
$\psi(V)$ is a subbundle of $E$. Since $f^*\psi$ coincides with the inclusion map
${\mathcal S}\, \hookrightarrow\, f^*E$, it follows immediately that the
subbundle $\psi(V)\, \subset\, E$ has the property that its pullback
$$
f^*\psi(V)\, \subset\, f^*E
$$
coincides with the subbundle ${\mathcal S}\, \subset\, f^*E$ in \eqref{cs}.

Since ${\mathcal S}$ is preserved by the action of $\Gamma$ on $f^*E$, it follows 
immediately that the subbundle $\psi(V)\, \subset\, E$ is preserved by the action of 
$\Gamma$ on $E$. But this contradicts the given condition that $E$ is a $\Gamma$--stable 
vector bundle; note that $\mu(\psi(V))\,=\,\mu(E)$, because $\mu({\mathcal S})\,=\, 
\mu(f^*E)$. Therefore, we conclude that the pullback $f^*E$ is a $\Gamma$--stable bundle.
\end{proof}

\section{parabolic stability of the pullback bundle}

Let $E_*$ be a parabolic bundle on $Y$ with respect to $P$. Let
$$
h\, :\, (X,\, 0)\, \longrightarrow\, (Y,\, P)
$$
be an \'etale Galois covering of orbifolds with Galois group $\Gamma$. Let
$W$ be the $\Gamma$--equivariant vector bundle on $X$
corresponding to $E_*$.

The following proposition is standard.

\begin{proposition}\label{prop3}
The parabolic bundle $E_*$ is stable (respectively, semistable) if and only if
$W$ is $\Gamma$--stable (respectively, $\Gamma$--semistable). Similarly,
$E_*$ is polystable if and only if $W$ is $\Gamma$--polystable.
\end{proposition}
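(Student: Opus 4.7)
The plan is to run everything through the categorical equivalence of Theorem~4.11 of \cite{KM}, which identifies $\PVect(Y,P)$ with the category of $\Gamma$--equivariant vector bundles on $X$ and, under the covering $h$, sends $E_*$ to $W$. Being an equivalence of exact additive categories, it carries parabolic subbundles of $E_*$ bijectively onto $\Gamma$--equivariant subbundles of $W$, and respects direct sum decompositions. So the first step is simply to invoke this equivalence to obtain a canonical bijection $F_* \longleftrightarrow W'$ between parabolic subbundles of $E_*$ and $\Gamma$--equivariant subbundles of $W$.

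The second step, which is the real content, is the slope-matching identity: if $F_*$ corresponds to $W'$ then $\operatorname{rank}(W') = \operatorname{rank}(F)$ and
$$
\mu(W') \;=\; \deg(h)\cdot\text{par-}\mu(F_*),
$$
with the proportionality constant $\deg(h)$ independent of the chosen subbundle. In positive characteristic the parabolic degree and slope are \emph{defined} through the equivariant bundle on an \'etale orbifold cover (Remark~3.9 of \cite{KP} and the following paragraph), so this identity holds by construction. In characteristic zero one must verify that the Mehta--Seshadri parabolic degree, computed from the quasiparabolic filtration and the weights, agrees up to the factor $\deg(h)$ with the ordinary degree of the equivariant pullback; this is the classical calculation of \cite{MS} and \cite{Bi} in which the parabolic weights at $y_i$ are read off from the isotypic decomposition of the stalk of $W$ at the points of $h^{-1}(y_i)$ under the inertia group $I_{y_i}$.

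Once the proportionality of slopes is in hand, the (semi)stable equivalence is automatic: the strict inequality $\text{par-}\mu(F_*) < \text{par-}\mu(E_*)$ is equivalent to $\mu(W') < \mu(W)$ for every corresponding pair $F_* \leftrightarrow W'$, and likewise for $\le$. For the polystable statement, if $W = \bigoplus W_i$ is a $\Gamma$--polystable decomposition into $\Gamma$--stables, then the equivalence of categories turns this into a direct sum decomposition $E_* = \bigoplus (E_i)_*$ into parabolic summands each of which is stable by the part just proved; and conversely. The main (really the only) obstacle is the slope-matching identity; everything else is formal once the equivalence of categories is invoked.
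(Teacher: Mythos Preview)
Your proposal is correct and follows essentially the same approach as the paper: in positive characteristic the statement is tautological (parabolic stability is \emph{defined} via the equivariant bundle), while in characteristic zero one uses the subbundle correspondence together with the degree identity $\deg(h)\cdot\text{par-deg}(E_*)=\deg(W)$ from \cite{Bi}. The paper's proof is simply a terser version of what you wrote.
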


\begin{proof}
This is a tautology when the base field has positive characteristic. For the characteristic zero case
we have $\text{degree}(h)\cdot\text{par-deg}(E_*)\,=\, \text{degree}(W)$.
Also, the subbundles of $E_*$ with induced parabolic structure correspond to the
$\Gamma$--equivariant subbundles of $W$ (see \cite{Bi}). The proposition follows
immediately from these.
\end{proof}

Let $f\,:\,X\, \longrightarrow \,Y$ be a genuinely ramified map between
smooth projective curves. Let $P$ be a branch data on $Y$. Let $V$ be a parabolic bundle on $Y$ 
with respect to $P$. Then $f^*V$ is a parabolic bundle on $X$ with respect to $f^*P$.

\begin{theorem}\label{thm1}
Suppose $B_f(y)$ and $P(y)$ are linearly disjoint over $K_{Y,y}$ for all $y\,\in \,Y$.
Also assume that either $P$ is tame or $f$ is Galois. Then the pullback, by $f$, of every
stable parabolic bundle on $Y$ with respect to $P$ is a stable parabolic
bundle on $X$ with respect to $f^*P$. 
\end{theorem}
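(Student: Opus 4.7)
My plan is to reduce the theorem to Proposition~\ref{prop1} by realizing the branch data $P$ through a suitable Galois cover of $Y$ and then passing to a base change. First I would apply Lemma~\ref{red.geom.branch.data} when $P$ is tame, or Lemma~\ref{wild.red.geom.branch.data} when $f$ is Galois, to produce a Galois cover $g\colon Z\to Y$ with group $\Gamma$ satisfying (i) $k(Z)$ and $k(X)$ are linearly disjoint over $k(Y)$, (ii) $B_f(y)$ and $B_g(y)$ are linearly disjoint over $K_{Y,y}$ for every $y$, and (iii) $B_g(y)=P(y)$ for every $y\in\Supp(P)$. Then $(Y,B_g)$ is geometric and $g\colon (Z,0)\to(Y,B_g)$ is \'etale Galois. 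The embedding $\PVect(Y,P)\hookrightarrow\PVect(Y,B_g)$, which only adds trivial parabolic data at points of $\Supp(B_g)\setminus\Supp(P)$, does not affect the parabolic slope or the induced subbundles, so a stable $V\in\PVect(Y,P)$ remains stable in $\PVect(Y,B_g)$; Proposition~\ref{prop3} then produces a $\Gamma$-stable equivariant bundle $\tilde V$ on $Z$ corresponding to $V$.

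Next, I would form the normalization $W$ of $X\times_Y Z$, with projections $p_X\colon W\to X$ and $p_Z\colon W\to Z$. The disjointness hypotheses force $W$ to be connected and give the identification $f^*B_g=B_{p_X}$; hence $p_X\colon (W,0)\to(X,f^*B_g)$ is an \'etale Galois cover of formal orbifolds with group $\Gamma$. The $\Gamma$-action on $W$ inherited from $Z$ makes $p_Z$ a $\Gamma$-equivariant morphism, and by Lemma~\ref{genuinely-ramified-pullback} it is genuinely ramified. Applying Proposition~\ref{prop1} to $p_Z$ and $\tilde V$ yields that $p_Z^*\tilde V$ is $\Gamma$-stable on $W$. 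By the functoriality of the parabolic/equivariant equivalence under the commutative base-change square, $p_Z^*\tilde V$ corresponds via $p_X$ to $f^*V\in\PVect(X,f^*B_g)$. Proposition~\ref{prop3} then shows that $f^*V$ is stable in $\PVect(X,f^*B_g)$, and since $f^*P\le f^*B_g$ with trivial extra structure, $f^*V$ is stable in $\PVect(X,f^*P)$, as required.

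The principal obstacle will be the compatibility bookkeeping. One must verify (a) that parabolic stability is preserved and reflected by the embedding $\PVect(Y,P)\hookrightarrow\PVect(Y,B_g)$ when only trivial parabolic data is added, (b) the identification $f^*B_g=B_{p_X}$, which relies essentially on the linear disjointness of $B_f(y)$ and $B_g(y)$, and (c) that the pullback of parabolic bundles along $f$ corresponds, via the equivalences attached to $g$ and $p_X$, to the pullback of $\Gamma$-equivariant bundles along $p_Z$. Each of these is a formal consequence of the definitions in \cite{KP,KM} --- parabolic slopes are unaffected by zero-weight points, induced subbundles coincide across the embedding, and the fibre-product square is functorial --- but unpacking them carefully in the formal-orbifold language is where the bulk of the technical work lies.
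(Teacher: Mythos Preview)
Your proposal is correct and follows essentially the same route as the paper: construct $g\colon Z\to Y$ via Lemma~\ref{red.geom.branch.data} or Lemma~\ref{wild.red.geom.branch.data}, base-change to the normalized fibre product $W$, use Lemma~\ref{genuinely-ramified-pullback} and Proposition~\ref{prop1} to obtain $\Gamma$-stability of the pullback on $W$, and then translate back via Proposition~\ref{prop3}. The paper handles your compatibility point~(b) by citing \cite[Proposition~2.16]{KP} for the fact that $(W,0)\to(X,f^*B_g)$ is $\Gamma$-Galois \'etale, and treats (a) and (c) as immediate from $P\le B_g$ and the definition of pullback in \cite{KP,KM}; your more explicit bookkeeping is fine but not strictly necessary.
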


\begin{proof}
By Lemma \ref{red.geom.branch.data} and Lemma \ref{wild.red.geom.branch.data} there exists a 
$\Gamma$--Galois covering $g\,:\,Z\, \longrightarrow \,Y$ such that
\begin{itemize}
\item $B_g(y)\,=\,P(y)$ for all 
$y\,\in \,\Supp(P)$, and

\item $B_f(y)$ and $B_g(y)$ are linearly disjoint.
\end{itemize}
Moreover, the normalization $W$ of $Z\times_Y X$ is connected. By Lemma 
\ref{genuinely-ramified-pullback}, the morphism $f_Z:W\, \longrightarrow \,Z$ is genuinely 
ramified. Also note that $f_Z$ is $\Gamma$--equivariant. Now replacing $P$ by $B_g$ we
may assume that $P$ is geometric branch 
data; this is because $P\,\le \,B_g$. Hence a parabolic bundle with respect to $P$ is 
also a parabolic bundle with respect to $B_g$. Consequently, $V$ corresponds to a 
$\Gamma$--equivariant bundle $\widetilde V$ on $Z$. Now by Proposition \ref{prop1},
the pullback $f_Z ^*\widetilde 
V$ is a stable $\Gamma$--equivariant bundle on $W$ (see Proposition
\ref{prop3}). But $(W,\,0)\, \longrightarrow 
\,(X,\,f^*B_g)$ is a $\Gamma$--Galois \'etale cover of formal
orbifolds (\cite[Proposition 2.16]{KP}). So the 
$\Gamma$--bundle $f_Z ^*\widetilde V$ on $W$ corresponds to the parabolic bundle $f^*V$ on $X$ 
with respect to $f^*P$. Hence $f^*V$ is a stable parabolic bundle on $Y$.
\end{proof}

\begin{remark}
The additional conditions in Theorem \ref{thm1} imply that the morphism $f$ induce a 
surjection $\pi_1(X,\,f^*P)\,\longrightarrow\, \pi_1(Y,\,P)$. Though we expect the result
to be true under 
a milder hypothesis, the proof in positive characteristic is not yet clear to us. The 
case of characteristic zero has been dealt with in \cite{BKP}.
\end{remark}

\section*{Acknowledgements}

We thank the referee for helpful comments. The first and third authors thank
Indian Statistical Institute for hospitality while the work was carried out.

\end{document}